\let\oldtocsection=\tocsection
\let\oldtocsubsection=\tocsubsection
\renewcommand{\tocsection}[2]{\hspace{-1mm}\bf\oldtocsection{ #1}{#2}}
\renewcommand{\tocsubsection}[2]{\hspace{6.9mm}\oldtocsubsection{#1}{#2}}
\newtheorem{thm}{Theorem}[section]
\newtheorem{prop}{Proposition}[section]
\newtheorem{remark}{Remark}[section]
\numberwithin{equation}{section}
\def\R{\mathbb{R}}
\def\R{\mathbb{R}}
\def\S{\Sigma}
\def\({\left(}
\def\){\right)}
\def\={\stackrel{(n=2)}{=}}
\def\p{\partial}
\newcommand{\be}{\begin{equation}}
\newcommand{\ee}{\end{equation}}
\newcommand{\bee}{\begin{equation*}}
\newcommand{\eee}{\end{equation*}}
\newcommand{\m}{\mathfrak{m}}
\begin{document}
	
	\title[]{Fill-ins with scalar curvature lower bounds and applications to positive mass theorems}

\author[S. McCormick]{Stephen McCormick}
\address{Institutionen f\"or teknikvetenskap och matematik \\
	Lule{\aa} tekniska universitet \\
	971 87 Lule\aa \\
	Sweden} 
\email{stephen.mccormick@ltu.se}

\begin{abstract}
	Given a constant $C$ and a smooth closed $(n-1)$-dimensional Riemannian manifold $(\S, g)$ equipped with a positive function $H$, a natural question to ask is whether this manifold can be realised as the boundary of a smooth $n$-dimensional Riemannian manifold with scalar curvature bounded below by $C$. That is, does there exist a \emph{fill-in} of $(\S,g,H)$ with scalar curvature bounded below by $C$?
	
	We use variations of an argument due to Miao and the author \cite{MM17} to explicitly construct fill-ins with different scalar curvature lower bounds, where we permit the fill-in to contain another boundary component provided it is a minimal surface. Our main focus is to illustrate the applications of such fill-ins to geometric inequalities in the context of general relativity. By filling in a manifold beyond a boundary, one is able to obtain lower bounds on the mass in terms of the boundary geometry through positive mass theorems and Penrose inequalities. We consider fill-ins with both positive and negative scalar curvature lower bounds, which from the perspective of general relativity corresponds to the sign of the cosmological constant, as well as a fill-in suitable for the inclusion of electric charge.
\end{abstract}
	\maketitle

\vspace{5mm}

	\tableofcontents
	
	\newpage 
	
\section{Introduction}
	Given a closed $(n-1)$-dimensional Riemannian manifold $(\S,g)$ equipped with a smooth function $H$, a natural question to ask is whether or not the triple $(\S,g,H)$ admits a non-negative scalar curvature fill-in. That is, {\em can $(\S,g)$ be realised as the boundary of an $n$-dimensional manifold $(\Omega,\gamma)$ with non-negative scalar curvature and outward-pointing mean curvature $H$?}. Recently the problem was highlighted by Gromov (Problem A in \cite{Gromov}) and it has been the subject of several recent works (for example, \cite{Jeff11,SWWZ,SWW}). Here we are predominantly interested in applications of such fill-ins to mathematical general relativity, where the fill-in $\Omega$ is often permitted to have another boundary component, provided that it is a minimal surface. Indeed in many situations where the problem arises, such a minimal surface inner boundary can itself be filled in and therefore causes no problems.
	
	In what follows, we will refer to a triple $(\S,g,H)$ as \textit{Bartnik data} following the literature on Bartnik's quasi-local mass, which concerns the closely related problem of non-negative scalar curvature \textit{extensions} -- that is, ``filling out" to infinity -- as opposed to fill-ins (see, for example, the review articles \cite{CCSurvey} and \cite{MyBartnikReview}). Rather than focusing purely on ``fill-ins'' with non-negative scalar curvature, it is interesting to ask if $(\S,g,H)$ can be realised as the boundary of a Riemannian manifold with any prescribed scalar curvature lower bound.
	
	This article serves to highlight an elementary approach to constructing fill-ins used in \cite{MM17} and demonstrate some new results using the same technique, with an emphasis on the applications of such fill-ins. For example, these fill-ins can lead to a kind of positive mass theorem with boundary, which we illustrate in the case of asymptotically hyperbolic manifolds (Theorem \ref{thm-PMTAHB}) as well as an example for asymptotically flat manifolds equipped with an electric field (Theorem \ref{thm-chargedpmt}). Since the type of fill-ins constructed have a minimal surface inner boundary, which means they also give a lower bound on the inner Bartnik mass -- a notion of quasi-local mass formulated by Bray \cite{BCinnermass} in the spirit of the Bartnik mass, replacing the aforementioned extensions with fill-ins. We state some of the main results more precisely below. The reader is directed to Section \ref{S-background} for more background on the problems considered here.

\begin{thm}[Fill-ins with negative scalar curvature lower bound]\label{thm-neg}
	{\, }	
	
\noindent Let $(\S,g,H)$ be $(n-1)$-dimensional Bartnik data with $H>0$, and $C<0$ be a negative constant such that
\bee
	R_g-2H\Delta H^{-1}>0
\eee 
and
\be \label{eq-neglambdacond}
	\max\left\{\max_\S \frac{n-2}{n-1}\,\frac{H^2}{R_g-2H\Delta H^{-1}}, \max_\S \frac{H^2r_o^2}{(n-1)^2}  \right\}<1-\frac{C}{n(n-1)} r_o^2
\ee 
where $R_g$ is the scalar curvature of $g$ and $r_o$ is the area radius of $(\S,g)$. Then there exists a compact manifold $(\Omega,\gamma)$ with boundary $\p\Omega=\Sigma_o\cup \Sigma_H$ where $\S_o$ is isometric to $(\S,g)$ with outward-pointing mean curvature $H$, $\S_H$ is minimal, and the scalar curvature $R_\gamma$ of $\gamma$ is greater than $C$.
\end{thm}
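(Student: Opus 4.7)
The plan is to construct an explicit fill-in by adapting the AdS--Schwarzschild reference metric with a $\Sigma$-dependent lapse, in the spirit of \cite{MM17}. On $[r_H,r_o]\times\Sigma$ I would take the ansatz
\begin{equation*}
\gamma = \frac{U(x)^2}{f(r)}\,dr^2 + \frac{r^2}{r_o^2}\,g,
\end{equation*}
where $f(r)=1-2m r^{2-n}-\frac{C}{n(n-1)}r^2$ with $m>0$, and $r_H$ denotes its unique positive zero. The outer slice $\{r_o\}\times\Sigma$ is then isometric to $(\Sigma,g)$ by construction, and a direct computation gives the outward mean curvature of $\{r\}\times\Sigma$ as $(n-1)\sqrt{f(r)}/(U(x)\,r)$. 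Matching this to $H$ on the outer boundary forces
\begin{equation*}
U(x)=\frac{(n-1)\sqrt{f(r_o)}}{r_o\,H(x)},
\end{equation*}
and the inner boundary at $r=r_H$ is automatically minimal since $f(r_H)=0$.

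The central step is to compute $R_\gamma$ using the warped-product-with-lapse scalar curvature formula. Invoking the identity $(n-1)[f'(r)/r+(n-2)f(r)/r^2]=(n-1)(n-2)/r^2-C$ (which just encodes that ordinary AdS--Schwarzschild has scalar curvature $C$), together with the observation that $U$ is a constant multiple of $H^{-1}$, so that $\Delta_g U/U=H\Delta_g H^{-1}$, the computation should collapse to
\begin{equation*}
R_\gamma = \frac{r_o^2\bigl(R_g-2H\Delta_g H^{-1}\bigr)}{r^2} - \frac{(n-1)(n-2)}{U^2 r^2} + \frac{C}{U^2}.
\end{equation*}

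It then remains to verify $R_\gamma>C$ under the stated hypotheses. Multiplying $R_\gamma-C$ through by $r^2$ gives
\begin{equation*}
r^2(R_\gamma-C) = r_o^2\bigl(R_g-2H\Delta_g H^{-1}\bigr) - \frac{(n-1)(n-2)}{U^2} + Cr^2\!\left(\frac{1}{U^2}-1\right).
\end{equation*}
Because $C<0$, the last (and only $r$-dependent) term is nonnegative as soon as $U^2\geq 1$ everywhere, which is equivalent to $f(r_o)\geq\max_\Sigma r_o^2H^2/(n-1)^2$. Using $U^2=(n-1)^2 f(r_o)/(r_o^2H^2)$, strict positivity of the remainder is equivalent to $f(r_o)>\max_\Sigma(n-2)H^2/[(n-1)(R_g-2H\Delta_g H^{-1})]$; this is where the pointwise assumption $R_g-2H\Delta_g H^{-1}>0$ enters. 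Finally, the requirement $m>0$ (so that $r_H\in(0,r_o)$) amounts to $f(r_o)<1-Cr_o^2/(n(n-1))$, and hypothesis \eqref{eq-neglambdacond} is exactly the statement that these three constraints on $f(r_o)$ admit a common solution.

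I expect the main technical point to be deriving and then carefully organising the scalar curvature formula, particularly tracking the contribution of the $x$-dependent lapse: the term $-2\Delta_g U/U$ produced by $U=U(x)$ is precisely what combines with $R_g/V^2$ to give the quantity $R_g-2H\Delta_g H^{-1}$ appearing in the hypothesis. Once the clean expression for $R_\gamma$ is in hand the verification is essentially an algebraic check, and the two parts of \eqref{eq-neglambdacond} play distinct roles: the first ensures that the main curvature contribution is positive, while the second ensures that a positive-mass AdS--Schwarzschild horizon fits inside the outer boundary.
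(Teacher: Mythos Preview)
Your proposal is correct and is essentially the paper's own argument, carried out in the area-radius coordinate $r$ rather than the arc-length parameter $t$ used there (the two are related by $dr=\sqrt{f(r)}\,dt$, so your $U$ is the paper's $A$ and your constraints on $f(r_o)$ coincide with the paper's conditions \eqref{eq-negC1}--\eqref{eq-negC2}). The only point to treat with care is that in your coordinates the metric is formally singular at $r=r_H$; this is the standard horizon coordinate singularity, removed precisely by passing to the $t$-parametrisation in which the paper works.
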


\begin{remark}
	The condition \eqref{eq-neglambdacond} is a technical condition that arises due to the construction itself and can be thought of as a kind of ``pointwise Hawking mass positivity". What we mean by this is that in the case that $R_g$ and $H$ are constant, then the condition reduces to simply positivity of the a higher dimensional Hawking mass tailored to the appropriate cosmological constant. Note that while the Hawking mass is generally considered in dimension $3$ and there are several inequivalent formulations of the Hawking mass in higher dimensions, when $H$ is constant they all agree with the expected value for spheres in Schwarzschild--anti-de Sitter manifolds. Namely, the expression
	
	\begin{equation*}
		m_{H}(\S,g,H;C)=\frac{r_o}{2}\left(1-\frac{r_o^2}{(n-1)^2}\(H^2+\frac{C(n-1)}{n}\)\right).
	\end{equation*}

\end{remark}

From this we are able to prove the following positive mass theorem for asymptotically hyperbolic manifolds with boundary using the same method as was used to obtain a Penrose-like inequality in \cite{MM17}. Specifically, we obtain the following.
\begin{thm}[Positive mass theorem for asymptotically hyperbolic manifold with boundary]\label{thm-PMTAHB}
	Let $(M,\gamma)$ be an asymptotically hyperbolic $n$-manifold that is spin, with scalar curvature $R_\gamma\geq C$ for some $C<0$, and inner boundary $\S=\p M$. Assume that the Bartnik data $(\S,g,H)$ induced on the boundary satisfies
	\bee
	R_g-2H\Delta H^{-1}>0
	\eee
	and
	\bee 
\max\left\{\max_\S 	\frac{n-2}{n-1}	\,\frac{H^2}{R_g-2H\Delta H^{-1}}, \max_\S \frac{H^2r_o^2}{(n-1)^2}  \right\}<1-\frac{C}{n(n-1)} r_o^2.
	\eee 
	Then $(M,\gamma)$ has positive asymptotically hyperbolic mass, in the sense of Wang \cite{WangMass}.
\end{thm}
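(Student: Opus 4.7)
The plan is to use Theorem~\ref{thm-neg} to fill in the boundary $\Sigma$ with a compact piece, glue it to $M$, and reduce the statement to Wang's positive mass theorem for asymptotically hyperbolic spin manifolds.

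First, the hypotheses on the Bartnik data $(\Sigma,g,H)$ are precisely those required by Theorem~\ref{thm-neg}. This produces a compact fill-in $(\Omega,\gamma_\Omega)$ with $\partial\Omega=\Sigma_o\cup\Sigma_H$, outer boundary $\Sigma_o$ isometric to $(\Sigma,g)$ with outward-pointing mean curvature $H$, inner boundary $\Sigma_H$ minimal, and $R_{\gamma_\Omega}>C$. Gluing $\Omega$ to $M$ along $\Sigma$ yields $\tilde M=\Omega\cup_\Sigma M$ equipped with a $C^0$ metric $\tilde\gamma$. Under the Bartnik-data sign convention, the value $H$ in both settings is the mean curvature of $\Sigma$ computed with respect to the unit normal that points from $\Sigma$ toward the asymptotic end of the eventual manifold, so the two mean curvatures of $\Sigma$ in $\tilde M$ agree and no distributional corner contribution arises at $\Sigma$.

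I would then apply Miao's mollification in a collar of $\Sigma$, followed by a conformal correction, to replace $\tilde\gamma$ by a smooth metric while exactly preserving $R\geq C$. The strict inequality $R_{\gamma_\Omega}>C$ supplied by Theorem~\ref{thm-neg} provides the room to absorb the mollification error, and the correction decays sufficiently at infinity so that the AH structure and the mass of $M$ are preserved. This is the same strategy used in the asymptotically hyperbolic setting in \cite{MM17}. Since the explicit fill-in produced by Theorem~\ref{thm-neg} is a cylinder-like region over $\Sigma$, the spin structure on $M$ extends across $\Sigma$ to $\Omega$, and hence to all of $\tilde M$. The outcome is a smooth complete asymptotically hyperbolic spin manifold with $R\geq C$ whose only boundary component is the minimal surface $\Sigma_H$, and with the same mass as $M$.

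Finally, one invokes Wang's positive mass theorem for AH spin manifolds in a form that permits a minimal inner boundary; equivalently, double $\tilde M$ across $\Sigma_H$, where again the vanishing mean curvatures from the two sides produce no corner term, smooth once more, and apply the boundaryless version. Either route gives positivity of the mass of $\tilde M$, which equals that of $M$ by construction. The main technical obstacle is the AH-adapted corner smoothing together with the conformal correction: one must solve a linear elliptic equation with prescribed decay at infinity on a non-compact manifold in order to restore the exact bound $R\geq C$ globally, and this is the delicate analytical ingredient already carried out in \cite{MM17}.
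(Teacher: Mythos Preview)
Your overall strategy matches the paper's: apply Theorem~\ref{thm-neg} to produce the fill-in, glue along $\Sigma$, double across the minimal surface $\Sigma_H$, and invoke the asymptotically hyperbolic positive mass theorem. The paper does exactly this, but it does \emph{not} carry out any explicit mollification or conformal correction; instead it applies the positive mass theorem with corners of Bonini and Qing \cite{BoniniQing} directly to the doubled manifold (which has corners along both $\Sigma$ and its reflected copy, and two asymptotically hyperbolic ends).

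The one genuine issue in your write-up is the attribution of the analytical work. You claim the AH-adapted corner smoothing and the elliptic conformal correction with prescribed decay at infinity are ``already carried out in \cite{MM17}''. They are not: \cite{MM17} treats only the asymptotically flat case (non-negative scalar curvature, ADM mass), and the relevant elliptic operator, asymptotic model, and decay estimates differ in the hyperbolic setting. The analysis you describe---smoothing the corner and restoring $R\geq C$ globally on an asymptotically hyperbolic manifold while preserving the mass---is precisely the content of Bonini--Qing \cite{BoniniQing}. So your plan is correct, but the reference you lean on for the hardest step does not contain what you need; citing \cite{BoniniQing} (as the paper does) both closes the gap and makes the explicit smoothing unnecessary.
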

\begin{remark}
	The definition of mass in the asymptotically hyperbolic case is somewhat subtle, however the precise definition will not be required here. This theorem follows directly from applying a known positive mass theorem to a manifold with corners, and for the sake of simplicity we apply the work of Bonini and Qing directly \cite{BoniniQing}, who prove a positive mass theorem for such manifolds with corners using Wang's definition of mass \cite{WangMass}.

	It is important to remark that the known positive mass theorem for asymptotically hyperbolic manifolds in fact already applies for manifolds with boundaries like those considered here provided that $H\leq \sqrt{\frac{-C(n-1)}{n}}$. So from this perspective, the result provides only a minor extension. The positive mass theorem with boundary presented here is more interesting when thought of as leading to a Penrose-like inequality assuming that the Riemannian Penrose inequality is established for asymptotically hyperbolic manifolds. Details of this Penrose-like inequality are given in Section \ref{S-apps}, however in order to rigorously prove it we would first require a proof of the Riemannian Penrose inequality in the asymptotically hyperbolic case, which remains an open problem.
\end{remark}

The next result demonstrates the existence of fill-ins with a non-negative scalar curvature lower bound.

\begin{thm}[Fill-ins with non-negative scalar curvature lower bound]\label{thm-pos}
	{\, }	
	
	Let $(\S,g)$ be a closed $(n-1)$-dimensional manifold, $H$ be a positive function. Suppose for some constant $C\geq 0$ we have
	\be \label{eq-poscond} 
R_g-2H\Delta H^{-1}>	 \frac{n(n-2)H^2}{n(n-1)-C r_o^2}
\ee	
	where $R_g$ is the scalar curvature of $g$ and $r_o$ is the area radius of $(\S,g)$, which we further ask satisfies $r_o<\sqrt{\frac{n(n-1)}{C}}$ when $C>0$. Then there exists a compact manifold with scalar curvature bounded below by $C$, whose boundary consists of two disconnected components, one being a minimal surface and the other isometric to $(\S,g)$ with outward-pointing mean curvature $H$.
\end{thm}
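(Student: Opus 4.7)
The plan is, following the strategy of \cite{MM17}, to construct the fill-in explicitly as a warped-product-type metric
\[
\gamma \;=\; v(\rho,x)^2\, d\rho^2 \;+\; u(\rho)^2\, g
\]
on the cylinder $[\rho_0,\rho_1]\times\S$. Here $u$ is a purely radial profile and the $x$-dependence of $v$ accommodates the non-constant prescribed mean curvature $H(x)$. The model guiding the construction throughout is the spatial Schwarzschild--de Sitter metric, which is exactly of this form with $g$ the round unit sphere, $u(\rho)=\rho$, and $v(\rho)=F(\rho)^{-1/2}$ for $F(\rho)=1-2m/\rho^{n-2}-C\rho^2/(n(n-1))$; it has constant scalar curvature $C$ with a horizon at $F=0$.

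First I would fix the boundary data. Choosing $u(\rho_1)=1$ makes the outer slice isometric to $(\S,g)$. Since the mean curvature of a $\rho$-slice with respect to the outward unit normal $v^{-1}\p_\rho$ is $(n-1)u'(\rho)/(u(\rho)v(\rho,x))$, prescribing mean curvature $H(x)$ at $\rho_1$ forces $v(\rho_1,x)=(n-1)u'(\rho_1)/H(x)$. This motivates the global ansatz $v(\rho,x)=w(\rho)/H(x)$ for a one-variable function $w$. At the inner end I impose $u'(\rho_0)=0$ with $u(\rho_0)>0$, which makes the slice $\{\rho=\rho_0\}$ totally geodesic and hence minimal.

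The core calculation is that of the scalar curvature of $\gamma$ via the Gauss equation together with the Riccati-type formula for the Ricci curvature in the unit normal direction $v^{-1}\p_\rho$, in the spirit of an ADM decomposition with lapse $v$ and no shift. The slice intrinsic curvature contributes $u^{-2}R_g$, and the terms involving derivatives of the lapse collapse into $\Lap_g v/v$-type pieces; under the ansatz $v=w/H$ this becomes $H\Lap_g H^{-1}$ multiplied by a function of $\rho$, plus terms depending only on $\rho$. After collecting everything, the requirement $R_\gamma\ge C$ decouples into (i) a pointwise inequality on $\S$ involving $R_g$, $H$, $\Lap_g H^{-1}$, $u(\rho)$ and $w(\rho)$, and (ii) a scalar ODE in $\rho$ for the pair $(u,w)$.

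The ODE in (ii) is then taken to be the Schwarzschild--de Sitter template $w^2=1-2m/u^{n-2}-Cu^2/(n(n-1))$ for an effective mass parameter $m$, with $m$ fixed by the matching condition $v(\rho_1,x)=(n-1)u'(\rho_1)/H(x)$. Condition \eqref{eq-poscond}, combined with $r_o<\sqrt{n(n-1)/C}$ when $C>0$, is precisely what is needed both to make $m>0$ (so that a horizon $\{w=0\}$ exists strictly inside the outer boundary) and to make the pointwise inequality in (i) hold at $\rho=\rho_1$: in the constant-$H$, round-$g$ case \eqref{eq-poscond} is equivalent to positivity of the higher-dimensional Hawking mass $m_H(\S,g,H;C)$ displayed in the remark following Theorem~\ref{thm-neg}. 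The main obstacle is to propagate the $\S$-pointwise inequality in (i) from $\rho=\rho_1$ to the whole interval $[\rho_0,\rho_1]$; this reduces to a monotonicity statement for an explicit function of $(u,w)$ along the ODE orbit, and the restriction $r_o<\sqrt{n(n-1)/C}$ when $C>0$ is exactly what keeps the orbit on the correct side of the model's cosmological horizon so that the monotonicity holds. Once verified, the resulting $\gamma$ is smooth on $[\rho_0,\rho_1]\times\S$, has scalar curvature bounded below by $C$, and its boundary consists of the minimal surface $\{\rho=\rho_0\}$ together with $(\S,g,H)$ at $\{\rho=\rho_1\}$, completing the construction.
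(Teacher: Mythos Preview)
Your strategy is exactly the paper's: build the collar as $A(x)^2\,dt^2+(u_m(t)/r_o)^2 g$ with $u_m$ the Schwarzschild--de Sitter profile $u_m'=\sqrt{1+\epsilon u_m^2-2m/u_m^{n-2}}$, $\epsilon=-C/(n(n-1))$, and $A=(n-1)u_m'(0)/(r_o H)$ so that the outer slice has the prescribed data; then verify $R_\gamma\ge C$ and that a minimal inner boundary appears. Two points in your outline, however, do not match how the argument actually closes.

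First, $m$ is \emph{not} determined by the boundary matching. The condition $v(\rho_1,x)=(n-1)u'(\rho_1)/H(x)$ only fixes the lapse in terms of $H$ and the profile derivative at the outer slice; it places no constraint on $m$. In the paper $m>0$ remains a free parameter chosen at the very end: after substituting the profile into the scalar curvature formula one obtains a single inequality depending on both $x\in\S$ and $u_m(t)$ (there is no genuine decoupling into ``pointwise on $\S$'' plus ``scalar ODE''), and hypothesis \eqref{eq-poscond} is exactly what allows one to pick some $m>0$ small enough so that this inequality holds throughout the collar. If you regard $m$ as already fixed by matching you lose precisely the degree of freedom the argument needs.

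Second, the inward propagation of the inequality is not a delicate monotonicity along the ODE orbit but a crude pointwise bound: the scalar curvature expression is affine in $u_m^2$, and one simply uses $0<u_m\le r_o$ to reduce to a $t$-independent condition, which becomes an upper bound on $m$ that \eqref{eq-poscond} renders strictly positive. The hypothesis $r_o<\sqrt{n(n-1)/C}$ (when $C>0$) is used only to guarantee that, for small $m$, the outer radius $r_o$ sits strictly between the black-hole and cosmological horizon radii $r_+<r_-$ of the model, so that the collar terminates at a genuine minimal surface $u_m=r_+>0$ rather than running into the cosmological horizon.
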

\begin{remark}
	Note that for coordinate $(n-1)$-spheres in an $n$-sphere, one would have equality in \eqref{eq-poscond}. When $C=0$, this reduces precisely to the case established by Miao and the author in \cite{MM17}.
\end{remark} 

\begin{remark}\label{rem-introminoo}
	There is no positive mass theorem or Penrose-like inequality in this case, as the counterexamples of Brendle, Marques and Neves, to Min-Oo's Conjecture \cite{BMN} demonstrate that a positive mass theorem in the usual sense does not hold here. This can be seen explicitly in Proposition \ref{prop-negm-minoo}, which demonstrates that there exist complete fill-ins of Bartnik data corresponding to the cosmological horizon in a negative mass Schwarzschild--de Sitter manifold.
\end{remark} 

As another application of the fill-in construction, Section \ref{ss-charge} establishes a charged Penrose-like inequality. That is, a lower bound on the total mass of an asymptotically flat manifold with boundary, equipped with an electric field, satisfying dominant energy conditions. This is stated precisely in Theorem \ref{thm-E-fill-in}.

This article is organised as follows. Section \ref{S-background} provides a brief background and overview of some related results before Section \ref{S-fillins} gives the general fill-in construction used here. Section \ref{S-fillinbounds} constructs the fill-ins required for positive and negative scalar curvature lower bounds, and finally Section \ref{S-apps} provides all of the applications to positive mass-type theorems.

\section{Brief Background}\label{S-background}

A Riemannian manifold with scalar curvature bounded below by a constant $C$ may be interpreted as time-symmetric initial data for the Einstein equations satisfying the dominant energy condition with cosmological constant $\Lambda=C/3$. In the case where $C\leq0$ the notion of total mass of an isolated system in the context of general relativity is well-understood, corresponding to the mass of an asymptotically hyperbolic manifold ($C<0$) or an asymptotically flat manifold ($C=0$). It is therefore no surprise that notions of mass in general relativity are connected with the problem of fill-ins with scalar curvature lower bounds.

More specifically, for $C=0$ the positive mass theorem \cite{Schoen-Yau79,Witten81} is a well-known foundational result in mathematical relativity, and for $C<0$ analogous positive mass theorems have been established for asymptotically hyperbolic manifolds \cite{CH-AHmass,WangMass}. That is, a complete asymptotically flat (resp. asymptotically hyperbolic) Riemannian manifold with scalar curvature bounded below by $0$ (resp. $C$, where $C<0$) has non-negative mass. The case where $C>0$, corresponding to a positive cosmological constant, does not have a standard notion of mass nor appear to have any prospect for a positive mass theorem to hold in general (see Remark \ref{rem-introminoo} and Proposition \ref{prop-negm-minoo}).

From the known positive mass theorems, it follows that if Bartnik data $(\S,g,H)$ can be realised as the boundary of an asymptotically flat or asymptotically hyperbolic manifold with scalar curvature lower bound of $C$ and negative mass (defined appropriately for the value of $C$), then it cannot admit a fill-in with scalar curvature bounded below by $C$. This is due to that fact that if such a fill-in exists then through a gluing procedure one could obtain an asymptotically flat or asymptotically hyperbolic manifold with minimimal surface boundary (if the boundary is non-empty) and negative mass, which would contradict the relevant positive mass theorem. Similarly, quasi-local positive mass theorems can also provide obstructions to the existence of fill-ins. For example, Shi and Tam's proof of the positivity of the Brown--York mass can be rephrased as a result on fill-ins as follows.

\begin{thm}[Shi--Tam \cite{ShiTam02}]\label{thm-shi-tam-1}
Let $(\S,g,H)$ be $(n-1)$-dimensional Bartnik data where $3\leq n\leq 7$ and with $H>0$, and such that $g$ is isometric to strictly convex closed hypersurface in $\R^n$. Then if
\be \label{eq-BY1}
	\int_\S Hd\mu_g > \int_\S H_o d\mu_g,
\ee 
where $H_o$ is the mean curvature of $(\S,g)$ isometrically embedded $\R^n$, there exists no fill-in of $(\S,g,H)$ with non-negative scalar curvature.
\end{thm}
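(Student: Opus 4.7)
The plan is to argue by contradiction via Shi and Tam's asymptotically flat extension construction. Assume a non-negative scalar curvature fill-in $(\Omega,\gamma)$ exists. Since $(\S,g)$ is given as isometric to a strictly convex closed hypersurface $\S_0\subset\R^n$, let $K\subset\R^n$ be the compact convex body bounded by $\S_0$ and foliate the exterior $\R^n\setminus K$ by the outward parallel hypersurfaces $\S_t=\{x:\mathrm{dist}(x,K)=t\}$, each of which remains strictly convex with positive Euclidean mean curvature $H_o(t,\cdot)$, so that the Euclidean metric takes the product form $g_E=dt^2+g_t$.

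Next I would conformally deform the exterior Euclidean metric by seeking $\tilde g = u^{4/(n-2)} g_E$ with $R_{\tilde g}\equiv 0$, which in these foliation coordinates reduces to a quasilinear parabolic PDE for $u(t,\cdot)$ in the $t$-direction. Prescribe $u(0,\cdot)$ on $\S_0$ so that the $\tilde g$-mean curvature of $\S_0$ equals $H$. Shi and Tam's construction yields global existence of $u$ and asymptotic flatness of $(\R^n\setminus K,\tilde g)$, and their key monotonicity formula asserts that
\begin{equation*}
\m(t):=\frac{1}{2(n-1)\omega_{n-1}}\int_{\S_t}\bigl(H_o(t)-H_{\tilde g}(t)\bigr)u(t)\,d\mu_{g_t}
\end{equation*}
is non-increasing in $t$ and tends to the ADM mass of $\tilde g$ as $t\to\infty$. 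Since $u\equiv 1$ on $\S_0$, $\m(0)$ is a positive multiple of $\int_{\S_0}(H_o-H)\,d\mu_g$, which is strictly negative by hypothesis; hence $m_{ADM}(\tilde g)<0$.

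Finally, glue $(\Omega,\gamma)$ to $(\R^n\setminus\mathrm{int}(K),\tilde g)$ along $\S\cong\S_0$. The induced metrics agree and both outward mean curvatures equal $H$ with respect to the shared normal, so the gluing seam is a Lipschitz corner with matched mean curvatures. Miao's corner-smoothing technique then produces a sequence of smooth asymptotically flat metrics with non-negative scalar curvature whose ADM masses converge to $m_{ADM}(\tilde g)$, and the Schoen--Yau positive mass theorem, which is valid without a spin hypothesis precisely in the range $3\leq n\leq 7$, forces $m_{ADM}(\tilde g)\geq 0$, contradicting the previous paragraph. The principal obstacle is Shi and Tam's monotonicity formula, whose proof hinges on careful integration-by-parts exploiting the scalar-flat equation for $u$ together with the strict convexity of every leaf $\S_t$; the corner smoothing is technical but standard, and the dimension restriction matches exactly the range in which the positive mass theorem is currently known without a spin assumption.
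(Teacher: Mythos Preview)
The paper does not actually prove this statement: Theorem~\ref{thm-shi-tam-1} is quoted in Section~\ref{S-background} as a background result attributed to Shi and Tam \cite{ShiTam02}, with no proof given. Your outline is essentially the original Shi--Tam argument, rephrased in its contrapositive form as an obstruction to fill-ins, together with Miao's corner technique \cite{Miao02} to handle the gluing seam; this is exactly the provenance the paper indicates.

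That said, two technical points in your sketch are not quite right and would need to be fixed in a full write-up. First, the Shi--Tam extension is not a conformal deformation $\tilde g=u^{4/(n-2)}g_E$ of the full Euclidean metric; rather, in the foliation $g_E=dr^2+g_r$ one deforms only the lapse, taking $\tilde g=u^2\,dr^2+g_r$. The zero-scalar-curvature condition for this ansatz is the quasilinear parabolic equation you allude to, whereas for a genuine conformal change it would be the (elliptic) Laplace equation, for which you cannot impose both $u|_{\S_0}=1$ and the mean-curvature condition simultaneously. Second, and relatedly, $u$ is \emph{not} identically $1$ on $\S_0$: one sets $u(0,\cdot)=H_o/H$ so that the $\tilde g$-mean curvature $H_o/u$ equals $H$ there. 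The monotone Brown--York quantity is $\int_{\S_r}(H_o-H_o/u)\,d\mu_{g_r}$, which at $r=0$ indeed reduces to $\int_{\S}(H_o-H)\,d\mu_g$, so your conclusion that $\m(0)<0$ and hence $m_{ADM}(\tilde g)<0$ is correct once the ansatz is repaired. The remainder of your argument (gluing, Miao smoothing, positive mass theorem in dimensions $3\le n\le 7$) is fine.
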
 

\begin{remark}
	When $n=3$, the existence of the required isometric embedding is well-known to be equivalent to the condition that $g$ has positive Gauss curvature \cite{Nirenberg, Pogorelov}. Furthermore, the dimensional restriction here is simply required to apply the positive mass theorem.
\end{remark}
Shi and Tam also proved a positivity statement for an asymptotically hyperbolic quasi-local mass in dimension $3$, which similarly gives the non-existence of fill-ins with negative scalar curvature lower bounds as follows.

\begin{thm}[Shi--Tam \cite{ShiTam07}, see also Wang--Yau \cite{WY}]\label{thm-shi-tam2}
	Let $(\S,g,H)$ be $2$-dimensional topologically spherical Bartnik data with $H>0$ and Gauss curvature $K_g> \frac{C}{6}$ for some $C<0$ that satisfies
	\bee 
	\int_\S (H_o-H)\cosh(-\sqrt{\frac{C}{6}}\,r)d\mu_g <0
	\eee 
	where $H_o$ is the mean curvature of $(\S,g)$ isometrically embedded in $3$-dimensional hyperbolic space with constant scalar curvature equal to $C$, and $r$ is the geodesic distance function from a fixed point in $\S$. Then $(\S,g,H)$ admits no fill-in with scalar curvature bounded below by $C$.
\end{thm}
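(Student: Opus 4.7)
The plan is to argue by contradiction, combining a Shi--Tam style asymptotically hyperbolic extension of $(\S,g,H)$ with the asymptotically hyperbolic positive mass theorem. Suppose that a fill-in $(\Omega,\gamma)$ of $(\S,g,H)$ exists with $R_\gamma\ge C$.

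First, I would realise $(\S,g)$ as a smooth strictly convex embedded surface in the model hyperbolic $3$-space $\mathbb{H}^3_C$ of constant sectional curvature $C/6$. The hypothesis $K_g>C/6$ is exactly the strict convexity condition required for the analogue of the Nirenberg--Pogorelov isometric embedding theorem in hyperbolic space, which additionally provides uniqueness up to isometry. Denote the induced mean curvature of this embedded sphere by $H_o$; this is the $H_o$ appearing in the theorem statement.

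Next I would construct an asymptotically hyperbolic fill-out of $(\S,g,H)$ with scalar curvature identically $C$, following the Shi--Tam construction adapted to the hyperbolic setting. One foliates the exterior of the embedded $\S$ in $\mathbb{H}^3_C$ by parallel surfaces $\S_s$, $s\ge 0$, so that the hyperbolic metric takes the warped-product form $ds^2+g_s$. Introducing a positive radial factor $u(s,\cdot)$, one considers the deformed metric $u^2\,ds^2+g_s$ and requires that its scalar curvature equal $C$; this reduces to a parabolic equation along the foliation for $u$, with initial condition $u(0,\cdot)=H_o/H$ chosen precisely so that the deformed inner boundary has mean curvature $H$ rather than $H_o$. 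Global existence and the correct asymptotically hyperbolic decay $u\to 1$ at infinity follow as in Shi--Tam, producing an AH manifold $(E,g_E)$ with $R_{g_E}\equiv C$ and boundary Bartnik data $(\S,g,H)$.

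Gluing $\Omega$ and $E$ along $\S$ then yields a complete asymptotically hyperbolic manifold whose Lipschitz interface has matching mean curvatures, so that the scalar curvature lower bound $R\ge C$ holds distributionally in a manner compatible with the asymptotically hyperbolic positive mass theorem for manifolds with corners. Invoking the result of Bonini--Qing \cite{BoniniQing} (used earlier in the paper in the proof of Theorem \ref{thm-PMTAHB}), the Wang mass of the glued manifold must be non-negative. On the other hand, the heart of the Shi--Tam construction is a quasi-local mass functional that is monotone non-increasing along the foliation, whose limit at infinity equals the Wang mass of $E$ and whose value at $s=0$ equals, up to a positive dimensional constant, $\int_\S(H_o-H)\cosh\!\bigl(\sqrt{-C/6}\,r\bigr)\,d\mu_g$. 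Monotonicity therefore bounds the Wang mass of the glued manifold above by this integral, which is strictly negative by hypothesis, yielding the required contradiction.

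The main obstacle is the Shi--Tam extension step: global existence and AH decay of the radial PDE, together with the monotonicity of the associated quasi-local mass. The convexity of $\S$ in $\mathbb{H}^3_C$ (guaranteed by $K_g>C/6$) and the scalar curvature bound $R_\gamma\ge C$ on the fill-in are precisely what is needed to fix the sign of the boundary contributions controlling the monotone quantity, in direct analogy with the Euclidean Shi--Tam argument underpinning Theorem \ref{thm-shi-tam-1}.
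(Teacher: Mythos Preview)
The paper does not actually prove this statement. Theorem~\ref{thm-shi-tam2} appears in Section~\ref{S-background} (``Brief Background'') as a citation of a known result due to Shi--Tam \cite{ShiTam07} (see also Wang--Yau \cite{WY}); it is stated for context only, and the paper offers no argument for it beyond the attribution and the remark on the isometric embedding that follows.

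That said, your outline is the correct strategy and is essentially the argument in \cite{ShiTam07,WY}: embed $(\S,g)$ convexly into the model hyperbolic space (guaranteed by $K_g>C/6$), build a Shi--Tam type asymptotically hyperbolic extension by solving the radial parabolic problem for the warping factor with initial data $H_o/H$, glue a putative fill-in along $\S$, and contradict an asymptotically hyperbolic positive mass theorem via the monotonicity of the quasi-local mass integral whose boundary value is the weighted $\int_\S(H_o-H)\cosh(\cdot)\,d\mu_g$. One small historical/technical point: the original Shi--Tam argument predates \cite{BoniniQing} and handles the corner by its own approximation, so invoking Bonini--Qing is a convenient but not the only route; either way the logic is sound. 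Since there is no proof in the present paper to compare against, there is nothing further to contrast.
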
 
\begin{remark}
	The isometric embedding into hyperbolic space required for Theorem \ref{thm-shi-tam2} not only exists, but is unique up to an isometry of hyperbolic space \cite{dCW,Pogorelov}. It seems likely that a version of Theorem \ref{thm-shi-tam2} would also hold in higher dimensions, however some care should be taken in checking the details with particular attention given to the existence of required isometric embedding.
\end{remark}

It will be a recurring theme that the size of $H$ governs whether or not a fill-in exists. Jauregui \cite{Jeff11} shows this in a clear way with the following theorem.

\begin{thm}[Jauregui \cite{Jeff11}]\label{thm-Jauregui1}
	Let $(\S,g,H)$ be $2$-dimensional Bartnik data data with positive Gauss curvature and $H>0$. Then there exists $\lambda_o>0$ such that $(\S,g,\lambda H)$ admits a fill-in with non-negative scalar curvature for all $\lambda<\lambda_o$ and there exists no such fill-in for $\lambda>\lambda_o$.
\end{thm}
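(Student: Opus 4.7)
The plan is to establish non-existence for $\lambda$ large via Shi--Tam (Theorem \ref{thm-shi-tam-1}), existence for $\lambda$ small via a direct collar construction, and then define the sharp threshold $\lambda_o$ as the supremum of admissible $\lambda$, with the passage from larger $\lambda'$ to smaller $\lambda$ proceeding by a monotonicity argument. The non-existence step is immediate: since $K_g > 0$, by Nirenberg--Pogorelov $(\Sigma, g)$ embeds isometrically in $\mathbb{R}^3$ as a strictly convex surface with positive mean curvature $H_o$, and Theorem \ref{thm-shi-tam-1} applied to $(\Sigma, g, \lambda H)$ rules out any non-negative scalar curvature fill-in whenever $\lambda \int_\Sigma H \, d\mu_g > \int_\Sigma H_o \, d\mu_g$. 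Setting $\lambda_1 := (\int_\Sigma H_o \, d\mu_g)/(\int_\Sigma H \, d\mu_g)$, no fill-in exists for any $\lambda > \lambda_1$.

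For existence at small $\lambda$, let $\Omega_o \subset \mathbb{R}^3$ be the bounded domain enclosed by the embedding, a scalar-flat fill-in of $(\Sigma, g, H_o)$, and prepend a Mantoulidis--Schoen-type collar $N = [0, T] \times \Sigma$ with metric $ds^2 + e^{2\alpha(x, s)} g$, where $\alpha$ --- taken as a cubic in $s$ with $x$-dependent coefficients --- satisfies
\begin{equation*}
\alpha(\cdot, 0) = \alpha(\cdot, T) = 0, \qquad \partial_s \alpha(\cdot, 0) = -\tfrac{\lambda H}{2}, \qquad \partial_s \alpha(\cdot, T) = -\tfrac{H_o}{2}.
\end{equation*}
The first pair forces induced metric $g$ on both slice-ends, while the second pair prescribes outward mean curvature $\lambda H$ at $\{0\} \times \Sigma$ and gives smooth gluing to $\Omega_o$ at $\{T\} \times \Sigma$. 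The scalar curvature of $N$ has the schematic form $e^{-2\alpha}(R_g - 2\Delta_g \alpha) - 6(\partial_s\alpha)^2 - 4\partial_s^2\alpha$; since $R_g = 2K_g > 0$, choosing $T$ large and $\lambda$ small makes this expression non-negative pointwise, and the glued manifold serves as a fill-in.

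Define $\lambda_o := \sup \{\lambda > 0 : (\Sigma, g, \lambda H) \text{ admits a non-negative scalar curvature fill-in}\}$; then $0 < \lambda_o \leq \lambda_1 < \infty$, and non-existence for $\lambda > \lambda_o$ is immediate from the definition. For $\lambda \in (0, \lambda_o)$, pick $\lambda' \in (\lambda, \lambda_o)$ with a fill-in $(\Omega', \gamma')$ of $(\Sigma, g, \lambda' H)$ and attach an exterior collar of the same form, interpolating from outer mean curvature $\lambda H$ at the new outer boundary to a smooth match with mean curvature $\lambda' H$ at the seam with $\Omega'$. The main obstacle is verifying non-negative scalar curvature for this monotonicity collar when $\lambda'$ lies close to $\lambda_o$ and outside the small-data regime of the basic existence step; the remedy is to exploit the freedom in $T$ and in the profile of $\alpha$ --- passing from a cubic to a more flexible ansatz if needed --- so that the positive $R_g$ term dominates the other contributions, at which point the interval structure of the admissible set follows.
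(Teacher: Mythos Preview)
This theorem is stated in the paper as a cited background result from Jauregui \cite{Jeff11}; the paper does not supply its own proof, so there is nothing in the paper to compare your argument against directly. That said, your overall strategy --- Shi--Tam for non-existence at large $\lambda$, a collar construction for existence at small $\lambda$, defining $\lambda_o$ as a supremum, and a monotonicity step to pass from $\lambda'$ to smaller $\lambda$ --- is the natural one and is in the spirit of Jauregui's original approach.

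There is, however, a genuine gap in your existence step. With the cubic profile you write down, set $\tau=s/T$; one computes that $\alpha'=\tfrac12 Q'(\tau)$ is \emph{independent of $T$}, while $\alpha''=O(T^{-1})$ and $\alpha=\tfrac{T}{2}Q(\tau)$. On the open set where $Q(\tau)>0$ (which for small $\lambda$ is most of $(0,1)$), the factor $e^{-2\alpha}$ decays exponentially in $T$, so $e^{-2\alpha}(R_g-2\Delta_g\alpha)\to 0$ and $-4\alpha''\to 0$, leaving $R_\gamma\to -6(\alpha')^2<0$. Thus ``$T$ large'' does not force $R_\gamma\ge 0$; in fact it drives $R_\gamma$ negative on the interior of the collar. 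Conversely, taking $T$ small makes $-4\alpha''(0)=-4(H_o+2\lambda H)/T$ large and negative at $\tau=0$. So the cubic ansatz does not obviously admit any choice of $T$ (uniformly in $x$) with $R_\gamma\ge 0$, and your assertion that ``choosing $T$ large and $\lambda$ small makes this expression non-negative pointwise'' is unsupported. A more careful choice of profile --- or an entirely different mechanism for producing a fill-in at small $\lambda$ --- is needed. The same concern applies to your monotonicity collar, which you yourself flag as the main obstacle but do not resolve; the final sentence gestures at ``more flexible ans\"atze'' without indicating what estimate would close the argument.
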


\begin{remark}
	Theorem \ref{thm-Jauregui1} also likely can be extended in a straightforward manner to higher dimension than $3$, although it relies on Theorem \ref{thm-shi-tam-1} so some extra hypotheses regarding the isometric embedding are likely required.
\end{remark}

In a similar spirit to Jauregui's result \cite{Jeff11}, Shi, Wang, Wei and Zhu \cite{SWWZ} establish the following result.
\begin{thm}[Shi--Wang--Wei--Zhu \cite{SWWZ}]\label{thm-swwz}
	Let $g$ be a metric on the sphere $\S=\mathbb{S}^{n-1}$ ($3\leq n\leq 7$) such that there exists a continuous path in the space of smooth positive scalar curvature metrics on $\mathbb{S}^{n-1}$ from $g$ to the standard round metric. Then there exists a constant $h_o$ depending on $g$ such that $(\S,g,H)$ admits no fill-in with non-negative scalar curvature for all $H>0$ satisfying
	\be \label{eq-nofillineq}
		\int_\S H d\mu_g > h_o.
	\ee 
\end{thm}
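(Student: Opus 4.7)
The plan is to construct, from the given path of positive scalar curvature metrics, an asymptotically flat ``reference extension'' of $(\S, g)$, and then, assuming a fill-in with excessive $\int H$, to deform this reference via a Shi--Tam quasi-spherical extension so as to produce an asymptotically flat manifold of negative ADM mass, contradicting the positive mass theorem in dimensions $3 \leq n \leq 7$.

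First I would build the reference via a Gromov--Lawson warped product. Let $\{g_s\}_{s \in [0,1]}$ be the given path with $g_0 = g$ and $g_1$ the round metric on $\mathbb{S}^{n-1}$. On the cylinder $\S \times [0, T]$ consider a metric of the form $dt^2 + \phi(t)^2 g_{s(t)}$ with $s : [0, T] \to [0, 1]$ a reparametrisation and $\phi$ a positive, increasing warping function; a direct scalar curvature computation shows that if $\phi$ grows fast enough, the uniform positivity of $R_{g_s}$ dominates the derivative terms coming from $\dot g_{s(t)}$, and the collar has $R > 0$. Its $t = 0$ end is $(\S, g)$ and its $t = T$ end is a round sphere of some radius, which one attaches smoothly to the complement of a Euclidean ball of matching radius, inserting a short transition collar to reconcile the mean curvatures while maintaining $R \geq 0$. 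The result is a complete asymptotically flat manifold $(M_e, \gamma_e)$ with $R_{\gamma_e} \geq 0$, boundary isometric to $(\S, g)$ with some positive outward mean curvature $H_e$, and ADM mass $m_e$ which can be made arbitrarily small and non-negative by stretching the transition.

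Set $h_o := \int_\S H_e \, d\mu_g + c_n m_e$, with $c_n$ the dimensional constant in the ADM mass normalisation. Suppose a fill-in $(\Omega, \gamma)$ of $(\S, g, H)$ with $R_\gamma \geq 0$ and $\int_\S H \, d\mu_g > h_o$ exists. I would then run a Shi--Tam quasi-spherical deformation in the collar of $M_e$ adjacent to $\partial M_e$: prescribe the boundary mean curvature to be the fill-in's value $H$ rather than $H_e$, and solve the resulting scalar parabolic equation for a conformal factor along the quasi-spherical foliation, obtaining a new metric $\gamma_e^H$ on $M_e$ that is still asymptotically flat with $R_{\gamma_e^H} \geq 0$ and whose inner boundary $(\S, g)$ now carries mean curvature exactly $H$. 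The Shi--Tam mass monotonicity yields
\bee
 m(M_e, \gamma_e^H) \;=\; m_e \,-\, c_n^{-1} \int_\S (H - H_e)\, d\mu_g ,
\eee
which is strictly negative by the defining inequality for $h_o$. Gluing $\Omega$ onto $(M_e, \gamma_e^H)$ along $\S$, where the induced metrics and the mean curvatures now match, produces an asymptotically flat manifold with $R \geq 0$ and negative ADM mass, contradicting the positive mass theorem.

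The main obstacle is carrying out the quasi-spherical deformation in the collar of $M_e$: Shi--Tam's construction was engineered around the exterior of a star-shaped surface in Euclidean space, whereas $(M_e, \gamma_e)$ is built by warping a path of metrics on $\S$. Adapting the scheme requires foliating the collar by mean-convex leaves transverse to the warping direction, establishing existence and regularity for the associated parabolic equation with the prescribed inner boundary data, and verifying that the monotone mass functional still converges to the ADM mass at infinity. All of this should be feasible provided the collar is arranged with an explicit foliated (not merely warped) structure near its inner boundary and the leaves inherit enough positive mean curvature from the Gromov--Lawson ansatz, but it is the most technically demanding ingredient of the argument.
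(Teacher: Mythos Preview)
This theorem is not proved in the present paper: it is quoted in Section~\ref{S-background} as a background result due to Shi--Wang--Wei--Zhu \cite{SWWZ}, with no argument supplied here. There is therefore no ``paper's own proof'' to compare against; any comparison must be with the original article \cite{SWWZ}.

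That said, your overall strategy---use the path of positive scalar curvature metrics to build an asymptotically flat reference extension of $(\S,g)$ via a warped collar glued to a Euclidean end, and then derive a contradiction with the positive mass theorem when $\int_\S H\,d\mu_g$ exceeds a threshold determined by the boundary data of that extension---is indeed the architecture of the proof in \cite{SWWZ}. The ``main obstacle'' you identify, namely running a Shi--Tam--type quasi-spherical deformation along the foliation of the hand-built collar rather than along level sets of a convex embedding in $\R^n$, is precisely the technical heart of \cite{SWWZ}, and your description of what needs to be checked (existence/regularity of the parabolic problem, monotonicity of the mass functional, convergence to the ADM mass) is accurate.

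One concrete error: the displayed identity
\bee
 m(M_e, \gamma_e^H) \;=\; m_e \,-\, c_n^{-1} \int_\S (H - H_e)\, d\mu_g
\eee
is not what Shi--Tam monotonicity gives. The monotonicity yields an \emph{inequality}: the ADM mass of the deformed exterior is bounded above by a Brown--York--type boundary integral, not equal to it. Fortunately the inequality goes the right way for your contradiction, so the logic survives, but you should not claim equality. Relatedly, the clean additive splitting into $m_e$ plus a boundary correction is not available in general; what one actually controls is the limit of the Shi--Tam mass functional along the foliation, and $h_o$ is defined as the value of that functional at the inner leaf of the reference extension rather than via the formula you wrote.
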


While there are several results demonstrating that if $H$ is too large in some sense then no fill-in can exist, it is difficult to explicitly quantify how large $H$ may be. It would be interesting to obtain an explicit, computable lower bound on $H$ in terms of $g$ for an obstruction of the existence of a fill-in, such as an explicit value for $h_o$ in \eqref{eq-nofillineq}.

\section{General Fill-In Construction}\label{S-fillins}
We construct fill-ins of Bartnik data $(\S,g,H)$ by constructing a metric $\gamma$ on the cylinder $\S\times I$ for some interval $I$, such that one boundary component induces the Bartnik data while the other is a minimal surface. This construction originates with \cite{MS} and has been used successfully in several related problems (see \cite{CCSurvey} for a survey).

We now give the general construction of these collars so we may refer to it later. Consider a metric of the form
\be \label{eq-metricform}
\gamma=A(x)^2dt^2+E(t)^2g
\ee 
where $A$ is a positive function on $\S$, $E$ is a positive function of $t$, and $g$ is a fixed metric on $\S$.

Computing the scalar curvature of the metric $\gamma$ we obtain
\begin{align}\begin{split} \label{eq-mainscalar}
E(t)^2R_\gamma=&\,R_g-2A^{-1}\Delta_\S A\\
&-(n-1)A^{-2}\( (n-2)E'(t)^2+2E(t)E''(t). \)\end{split}
\end{align}

We will be interested in choices of $E$ and $A$ that allow us to prescribe the mean curvature of some constant $t$ boundary surface, and ensure a prescribed lower bound on $R_\gamma$. The mean curvature of each slice with constant $t$ can computed directly as
\be \label{eq-Hformula}
H_t(x)=\frac{(n-1)E'(t)}{E(t)A(x)}.
\ee 
For convenience, we will choose the parameter $t$ such that $t=0$ is the surface with prescribed mean curvature, and we use $t<0$ for the fill-in, so that $t$ is increasing towards the outer boundary. We also must prescribe the induced metric on the outer boundary surface. To this end, we will always scale $E$ such that $E(0)=1$. Letting $H=H(x)$ be the mean curvature we wish to prescribe, we find
\be \label{eq-Aformula}
A(x)=\frac{(n-1)E'(0)}{H(x)}
\ee
and then \eqref{eq-mainscalar} becomes
\begin{align}\begin{split} \label{eq-mainscalar2}
E(t)^2R_\gamma=&\,R_g-2H\Delta_\S (H^{-1})\\
&-\frac{H^2}{(n-1)(E'(0))^2}\((n-2)E'(t)^2+2E(t)E''(t). \)\end{split}
\end{align}

In what follows, we will use this construction with different choices of $E$, coming from profile functions of model spherically symmetric metrics. We now demonstrate that the idea used to prove the main result of \cite{MM17} can be used to construct fill-ins with general scalar curvature lower bounds. Let $\(\S,g,H\)$ be Bartnik data where $\S$ is an $(n-1)$-sphere and $H$ is a positive function. We consider a metric of the form
\be \label{eq-metform}
\gamma=A(x)^2dt^2+\frac{u_m(t)^2}{r_o^2}g
\ee 
on $\S\times[t_o,0]$ where $r_o$ is the area radius of $g$ and $t_o<0$ will be determined later such that $\S\times\{t_o\}$ corresponds to a minimal surface boundary. Note that this is simply \eqref{eq-metricform} with $E(t)=\frac{u_m(t)}{r_o}$ and $g(t)=g$ a constant path. In \cite{MM17} the function $u_m$ was chosen to be a Schwarzschild profile function, however here we would like to include Schwarzschild--de Sitter and Schwarzschild--anti-de Sitter profiles too. Specifically $u_m$ is taken to be such that $u_m(0)=r_o$ and satisfies
\bee 
	u_m'(t)=\sqrt{1+\epsilon u(t)^2-\frac{2m}{u(t)^{n-2}}}
\eee
where $\epsilon\in\R$ depends on the desired scalar curvature lower bound and $m>0$ is some parameter. Specifically, for some $C\in\R$, we seek fill-ins with scalar curvature bounded below by $C$, and to do so we will choose $\epsilon=-\frac{C}{n(n-1)}$. From \eqref{eq-Hformula}, the mean curvature of each constant $t$ slice is given by
\be \label{eq-Hprescribe}
	H_t(x)=\frac{n-1}{A(x)u_m(t)}\sqrt{1+\epsilon u_m(t)^2-\frac{2m}{u_m(t)^{n-2}}}.
\ee 
It is important to note that for $\epsilon\geq0$, or $\epsilon<0$ and $m$ not too large, one may solve find a value of $u_m=r_H$ such that $1+\epsilon u_m(t)^2-\frac{2m}{u_m(t)^{n-2}}=0$, corresponding to an apparent horizon in the model manifold. In fact, if $\epsilon<0$ there are two such values of $u_m$ where this quantity vanishes, in which case $r_H$ is taken to be the smaller of the two, as the larger radius corresponds to a cosmological horizon in the Schwarzschild--de Sitter manifold. In particular, if $r_o>r_H$ then there is always a $t_o<0$ such that $H_{t_o}=0$. We will always ensure this is true so that the interior boundary of our collar is a minimal surface. 

Similarly, equation \eqref{eq-mainscalar2} for the scalar curvature gives

\begin{align} \begin{split}
	R_\gamma-C=\frac{r_o^2}{u_m^2}&\( R_g-2H\Delta H^{-1}-\frac{n-2}{n-1}H^2(1+\epsilon r_o^2-\frac{2m}{r_o^{n-2}})^{-1}\right.\\&\left.-  \(\frac{C}{r_o^2}+\frac{n}{n-1} H^2\epsilon (1+\epsilon r_o^2-\frac{2m}{r_o^{n-2}})^{-1} \)u_m^2 \).\label{eq-mainscalareq}\end{split}
\end{align}

When $C=0$ this is exactly what was considered in \cite{MM17}. We consider the cases of $C<0$ and $C\geq0$ separately, however the idea is the same in both cases. We choose $m>0$ such that the manifold $\( \S\times[t_o,0],\gamma \)$ has a minimal surface at the surface $t=t_o$, scalar curvature bounded below by $C$ and mean curvature of the surface $t=0$ prescribed as above.

\section{Fill-Ins With Scalar Curvature Lower Bounds} \label{S-fillinbounds}

\subsection{Negative scalar curvature lower bound}\label{SSec-negscalarbound}
We consider the case where the scalar curvature lower bound $C$ is negative, and first establish Theorem \ref{thm-neg}. Although Theorem \ref{thm-PMTAHB} is essentially a corollary of this, we reserve the proof of that until Section \ref{S-apps} to discuss with other applications of the fill-ins satisfying scalar curvature bounds.

\begin{proof}[Proof of Theorem \ref{thm-neg}]
	Consider the fill-in constructed above in Section \ref{S-fillins}, whose metric is given by \eqref{eq-metform}. In this case, we choose $\epsilon=-\frac{C}{n(n-1)}>0$ and the function $u_m$ is the radial profile function for an Schwarzschild--anti-de Sitter manifold with scalar curvature equal to $C$. By \eqref{eq-Hprescribe}, the fill-in constructed above has a minimal surface at some $t=t_o$ provided $m>0$. So we simply seek to choose $m>0$ such that the right-hand side of \eqref{eq-mainscalareq} is non-negative. A straightforward albeit non-optimal way to ensure that, is to impose
	\be \label{eq-negC1}
	R_g-2H\Delta H^{-1}-\frac{n-2}{n-1}H^2(1+\epsilon r_o^2-\frac{2m}{r_o^{n-2}})^{-1}\geq0
	\ee 
	and
	\be \label{eq-negC2}
	\frac{C}{r_o^2}+\frac{n}{n-1} H^2\epsilon (1+\epsilon r_o^2-\frac{2m}{r_o^{n-2}})^{-1} \leq 0.
	\ee 
	With our choice of $\epsilon$, \eqref{eq-negC2} becomes 
	\be
	m\leq \frac{r_o^{n-2}}{2}\(1+\epsilon r_o^2-\frac{H^2r_o^2}{(n-1)^2}  \),
	\ee 
	and \eqref{eq-negC1} can be expressed similarly as
	\be
	m\leq \frac{r_o^{n-2}}{2}\(1+\epsilon r_o^2-\frac{n-2}{n-1}\frac{H^2}{	R_g-2H\Delta H^{-1}}  \).
	\ee 
	In order for the fill-in to have a minimal surface inner boundary, rather than a cusp, we require $m>0$ so we obtain a fill-in provided
	\be \label{eq-cond-negC}
	\max\left\{\max_\S \frac{n-2}{n-1}\,\frac{H^2}{R_g-2H\Delta H^{-1}}, \max_\S \frac{H^2r_o^2}{(n-1)^2}  \right\}<1+\epsilon r_o^2.
	\ee 
	
\end{proof}
Note that in the case where $C=0$, \eqref{eq-negC2} is trivially satisfied and this reduces to what was shown in \cite{MM17}.

\subsection{Positive scalar curvature lower bound}
We next turn to consider a positive lower bound on the scalar curvature and prove Theorem \ref{thm-pos}.

\begin{proof}[Proof of Theorem \ref{thm-pos}]
	We again use the same fill-in metric \eqref{eq-metform} from Section \ref{S-fillins}, however in this case with $C>0$ and $\epsilon=-\frac{C}{n(n-1)}<0$.
	In this case, the model space is the Schwarzschild--de Sitter family of manifolds. In this case, a minimal surface is again located where $u'_m=0$, however here we must take a little more care with the roots of
	\be \label{eq-desitroots}
	1+\epsilon x^2-\frac{2m}{x^{n-2}}.
	\ee
	
	When $\epsilon\geq0$ we find that there is only one root, and therefore one minimal surface in the model space, which represents a black hole's horizon. However, when $\epsilon<0$ and provided that $m$ is not too large, \eqref{eq-desitroots} has two real positive roots, $0<r_+<r_-$. These correspond to a black hole horizon at $r_+$ and a cosmological horizon at $r_-$ in the model Schwarzschild--de Sitter manifold. This model is a compact manifold with two connected minimal surface boundary components, one sphere at $r_+$ and another sphere at $r_-$. We will choose $m$ arbitrarily small but positive and require that the area radius of $g$, $r_o$ satisfy $r\in(r_+,r_-)$. Since the roots $r_+$ and $r_-$ tend to $0$ and $ \frac{1}{\sqrt{-\epsilon}} $ respectively, as $m\to 0$, the condition $r_o^2<\frac{n(n-1)}{C}$ guarantees $r_+>2m>0$ for sufficiently small $m$. In particular, we have $r_o\geq u_m(t)\geq r_+>2m>0$. Turning back to the scalar curvature equation, \eqref{eq-mainscalareq} implies that $R_\gamma\geq C$ is equivalent to

	\bee 
	R_g-2H\Delta H^{-1}-\frac{H^2}{n-1}(1+\epsilon r_o^2-\frac{2m}{r_o^{n-2}})^{-1}\(n-2-n\epsilon u_m^2\)+\frac{n(n-1)\epsilon u_m^2}{r_o^2}\geq 0.
	\eee
	
	As we do not expect to obtain anything optimal by this method, it will suffice to make a crude estimate using the fact that $u_m$ is positive. Specifically, we ask that
	
	\bee 
	R_g-2H\Delta H^{-1}-\frac{n-2}{n-1}H^2(1+\epsilon r_o^2-\frac{2m}{r_o^{n-2}})^{-1}\geq 0,
	\eee 
	which is ensured by choosing
	\bee 
	0<m\leq \frac{r^{n-2}}{2}\( 1+\epsilon r_o^2-\frac{n-2}{n-1}\frac{H^2}{R_g-2H\Delta H^{-1}}\).
	\eee 
	
	That is, there exists a fill-in with minimal surface boundary and scalar curvature bounded below by $C$, provided that
	\be \label{eq-pos-loc-pmt-cond}
	\frac{n-1}{n-2}\frac{R_g-2H\Delta H^{-1}}{H^2}<	 \(1-\frac{C}{n(n-1)} r_o^2\)^{-1}.
	\ee
\end{proof}

	Note that equation \eqref{eq-pos-loc-pmt-cond} amounts to a strong, point-wise quasi-local positive mass assumption, which is somewhat natural in the context of non-positive scalar curvature lower bounds given the positive mass theorem. However, as mentioned above, the analogous positive mass theorem does not hold for positive scalar curvature lower bounds. That is, fill-ins with positive scalar curvature bounds should exist under far weaker assumptions than in the case of non-positive scalar curvature bounds. 
	
	One can see this from the counterexample to Min-Oo's conjecture constructed by Brendle, Marques and Neves in \cite{BMN}. Therein they establish the existence of compact manifolds with scalar curvature bounded below by $n(n-1)$, with a neighbourhood of the boundary isometric to a neighbourhood of the boundary of the $n$-dimensional hemisphere, and strictly positive scalar curvature somewhere on the interior. From a perturbation of this counterexample one can obtain a manifold with the same scalar curvature lower bound with a neighbourhood of the boundary isometric to a neighbourhood of the cosmological horizon (unstable minimal surface) in a negative mass Schwarzschild--de Sitter manifold. Although this is a fairly obvious consequence of the main results of \cite{BMN}, as it does not appear to be recorded explicitly, we state it here for completeness.
\begin{prop}\label{prop-negm-minoo}
	There exists a compact Riemannian manifold $(M,g)$ with boundary, having scalar curvature $R_g\geq 6$, with the inequality strict ($R_g>6$) on an open subset, such that there exists a subset $\Omega\supset\p M$ isometric to a neighbourhood of the boundary of a Schwarzschild--de Sitter manifold with negative mass.
\end{prop}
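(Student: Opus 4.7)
The plan is to repeat the Brendle--Marques--Neves construction with a background that is already modelled on a neighbourhood of the cosmological horizon in negative-mass Schwarzschild--de Sitter, rather than on the round hemisphere of $S^n$.

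First, recall that \cite{BMN} produces a smooth compact $n$-manifold $(B, g_0)$ diffeomorphic to the ball, with $R_{g_0} \ge n(n-1)$, $R_{g_0} > n(n-1)$ on an open subset $U_0 \subset B$, and a neighbourhood of $\p B$ exactly isometric to a hemispherical collar $ds^2 + \cos^2(s)\,g_{S^{n-1}}$. Their construction is obtained from the hemisphere by a compactly supported symmetric $2$-tensor perturbation in the interior together with a conformal correction via a Lichnerowicz-type equation, neither of which affects the boundary collar. Both steps depend continuously on the background metric in a $C^k$-neighbourhood of the hemisphere.

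For $m_* < 0$ of small absolute value, the Schwarzschild--de Sitter metric with mass $m_*$ and cosmological constant normalised so $R = n(n-1)$ has cosmological horizon at $r_c(m_*) > 1$, with $r_c(m_*) \to 1$ as $m_* \to 0^-$. Near this horizon it takes the warped form $ds^2 + \rho_{m_*}(s)^2\,g_{S^{n-1}}$ with $\rho_{m_*}$ depending smoothly on $m_*$ and satisfying $\rho_0 = \cos$. The next step is to produce a modified background $\bar g_{m_*}$ on a ball $B_{m_*}$ by replacing the hemispherical collar of $g_0$ with an interpolating warped-product collar that coincides exactly with the $\rho_{m_*}$ profile in a thin layer adjacent to the new boundary $\p B_{m_*}$, which is a round sphere of area radius $r_c(m_*)$, and matches $g_0$ smoothly further inside. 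For $|m_*|$ small, $\bar g_{m_*}$ is $C^k$-close to $g_0$ after the natural diffeomorphism identification of their respective collars.

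Finally, apply the same compactly supported BMN perturbation together with the conformal correction to $\bar g_{m_*}$. By continuity of the analytical ingredients in the background -- in particular solvability of the Lichnerowicz-type PDE and positivity of its linearisation at the trivial solution -- these steps remain valid for all $|m_*|$ sufficiently small. Since both operations are supported away from $\p B_{m_*}$, the prescribed SdS collar is preserved exactly, and we obtain a smooth metric $g_{m_*}$ on $B_{m_*}$ with $R_{g_{m_*}} \ge n(n-1)$, strict on an open subset inherited from $U_0$, and a neighbourhood of $\p B_{m_*}$ isometric to a neighbourhood of the cosmological horizon in negative-mass Schwarzschild--de Sitter. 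The main obstacle is checking that the interpolated background $\bar g_{m_*}$ is genuinely a $C^k$-small perturbation of $g_0$, uniformly for small $|m_*|$; this follows from $\rho_{m_*}\to \cos$ in $C^\infty$ together with a routine interpolation of warped-product profiles, which is why the statement can reasonably be described as a fairly obvious consequence of \cite{BMN}.
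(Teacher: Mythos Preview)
Your approach differs from the paper's and, as written, has a gap. The paper does not reopen the BMN construction at all: it applies Theorem~4 of \cite{BMN} as a black box to obtain a metric $g_o$ on the hemisphere with $R_{g_o}>n(n-1)$ everywhere, round boundary metric, and strictly \emph{negative} boundary mean curvature; then rescales by a factor slightly larger than $1$ so that the boundary becomes a round sphere of area just above $\omega_{n-1}$ (still with $R>n(n-1)$ and $H<0$), which is precisely the induced metric on a negative-mass Schwarzschild--de Sitter cosmological horizon; and finally invokes Theorem~5 of \cite{BMN}, a local deformation/gluing result, to attach an exact SdS collar. No profile interpolation and no stability analysis of the BMN PDE are needed.

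By contrast, your argument has two problems. First, you define $\bar g_{m_*}$ by modifying the collar of $g_0$, which is already the BMN \emph{output}, and then propose to ``apply the same compactly supported BMN perturbation'' to it; taken literally this applies the interior perturbation twice, and the intended logic is muddled (if you meant to modify the round hemisphere rather than $g_0$, the write-up does not say so). Second, and more seriously, the interpolation region between the $\cos(s)$ and $\rho_{m_*}(s)$ profiles will in general have scalar curvature slightly below $n(n-1)$, since warped-product scalar curvature involves $f''$; so a correction is genuinely needed there. But the Lichnerowicz-type conformal factor you invoke solves an elliptic PDE on the whole manifold and is not compactly supported, contradicting your assertion that both operations are ``supported away from $\partial B_{m_*}$'' and hence leave the SdS collar exactly intact. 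The ``main obstacle'' you flag (that $\bar g_{m_*}$ is $C^k$-close to $g_0$) is not the real one; $C^k$-closeness does not by itself give $R\geq n(n-1)$. What you are attempting to do by hand is essentially the content of Theorem~5 of \cite{BMN}; the paper simply cites it.
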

\begin{proof}
	By Theorem 4 of \cite{BMN} there exists a metric $g_o$ on the hemisphere $S_+^{n}$ with scalar curvature $R_{g_o}>n(n-1)$ everywhere, $g_o$ is exactly equal to the standard round metric on $\p S_+^n=S^{n-1}$, and the outward-pointing mean curvature $H$ of $\p S_+^n$ with respect to $g_o$ is strictly negative. By a small rescaling, one can also obtain a metric $g_\varepsilon$ that also satisfies $R_{g_\varepsilon}>n(n-1)$ with negative mean curvature on the boundary, such that $g_\varepsilon$ restricted to $\p S_+^n$ is round with area $A_\varepsilon>4\pi$. Note that the cosmological horizon boundary of a Schwarzschild–anti-de Sitter manifold of mass $m$ has area $\widetilde A_m$ satisfying
	\bee 
	m=\frac12\( \frac{\widetilde A_m}{\omega_{n-1}} \)\(1-\frac{\widetilde A_m}{\omega_{n-1}} \).
	\eee 
	That is, the metric $g_\varepsilon$ restricted to the boundary $\p S_+^n$ is equal to the boundary metric for a negative mass Schwarzschild–anti-de Sitter manifold. One can therefore apply Theorem 5 of \cite{BMN} to obtain the result. Note that although the negative mass Schwarzschild–anti-de Sitter manifolds are singular at a point, this does not affect the application of Theorem 5 of \cite{BMN} since it is purely a local construction.
\end{proof}

\begin{remark}\label{rem-conj-minoo2}
	The above is simply a perturbative construction, so the manifolds obtained correspond to a (negative) mass parameter very close to zero. It would be an interesting question to ask for Bartnik data $(S^{n-1},g_{0},H=0)$, with $g_{o}$ round, how large can $|S^{n-1}|_{g_o}$ be and still admit a fill-in with scalar curvature bounded below by $n(n-1)$.
\end{remark}

\section{Applications}\label{S-apps}

\subsection{The asymptotically hyperbolic positive mass theorem with boundary}\label{subsec-AHPMT}
As mentioned above, in \cite{MM17}, the fill-ins constructed were used to prove a ``Penrose-like" inequality. In particular, it was shown that for an asymptotically flat manifold with boundary $\S$ satisfying the hypotheses of Theorem \ref{thm-pos} with $C=0$, there exists a fill-in with minimal surface boundary whose area $A$ satisfies
\be \label{eq-MM17}
	\(\frac{A}{\omega_{n-1}}\)^{\frac{n-2}{n-1}}=\( \frac{|\S|}{\omega_{n-1}}\)^{\frac{n-2}{n-1}}\( 1-\frac{n-2}{n-1}\frac{H^2}{R_g-2H\Delta H^{-1}} \),
\ee 
and then via the Riemannian Penrose inequality we obtain that the ADM mass is bounded below by the right-hand side of \eqref{eq-MM17}.

An analogous result follows by the same reasoning for asymptotically hyperbolic manifolds with boundary using the fill-in constructed in Section \ref{SSec-negscalarbound} if one assumes the Riemannian Penrose inequality holds in this case. While this version of the Riemannian Penrose inequality is only conjectured, and not yet established, it is known that the positive mass theorem holds for asymptotically hyperbolic manifolds with minimal surface boundary \cite{CH-AHmass}. So we obtain the following positive mass theorem for asymptotically hyperbolic manifolds with boundary.

\begin{proof}[Proof of Theorem \ref{thm-PMTAHB}]
	Let $(M,\gamma)$ satisfy the hypotheses of Theorem \ref{thm-PMTAHB} with boundary Bartnik data $(\S,g,H)$, and let $\gamma_\Omega$ be the fill-in metric on $\Omega=\S\times[t_o,1]$ constructed for the proof of Theorem \ref{thm-neg}, filling in this Bartnik data. We can attach $(\Omega, \gamma_\Omega)$ to $(M,\gamma)$ along their matching Bartnik data to form a manifold with corner \`a la Miao \cite{Miao02} and then double it along the minimal surface to form a complete asymptotically hyperbolic manifold with corners and two ends. The conclusion follows now from the positive mass theorem with corners, which has been established in the asymptotically hyperbolic case by Bonini and Qing \cite{BoniniQing}.
\end{proof}

\begin{remark}
If the conjectured Riemannian Penrose inequality for asymptotically hyperbolic manifolds were established, then one could use a suitable version of it for manifolds with corners to obtain an improved lower bound on the mass of an asymptotically hyperbolic manifold with boundary. In particular, we have the following.

Let $(M,\gamma)$ be an asymptotically hyperbolic $n$-manifold with scalar curvature bounded below by $C=-\epsilon n(n-1)$ and interior boundary $\S$, and define the quantity
\be \label{eq-chidefn} 
\chi=\frac{1}{n-1}	\max\left\{\max_\S \frac{(n-2)H^2}{R_g-2H\Delta H^{-1}}, \max_\S \frac{H^2r_o^2}{(n-1)}  \right\}.
\ee
If $\chi<1+\epsilon r_o^2$, where $r_o$ is the area radius of $\S$, then assuming the asymptoticaly hyperbolic Penrose inequality holds (on a manifold with corners), we would conclude that the total mass of $(M,\gamma)$ is bounded below by $\frac12r_o^{n-2}\( 1+\epsilon r_o^2-\chi \)$. 
\end{remark}

	We state this as a remark and omit a formal proof of this statement, as we require a precise statement of the appropriate Riemannian Penrose inequality, which remains an open problem. However, a sketch is provided as we will refer back to at the end of Section \ref{ssec-entropy}.
\begin{proof}[Sketch of proof]	
	Choose $m= \frac{r_o^{n-2}}{2}\( 1+\epsilon r_o^2-\chi\)>0$ and obtain a fill-in with minimal surface boundary as in Section \ref{SSec-negscalarbound}. One can quickly check from the form of the metric \eqref{eq-metform} that the area radius $r_H$ of this minimal surface satisfies
	\bee 
	1+\epsilon r_H^2-\frac{2m}{r_H^{n-2}}=0.
	\eee 
	From our choice of $m$, we therefore obtain the relationship
	\be\label{eq-minsurfaceneg} 
	\frac12r_H^{n-2}\( 1+\epsilon r_H^2 \)=\frac12r_o^{n-2}\( 1+\epsilon r_o^2-\chi \),
	\ee 
	where the left-hand side of the equation is exactly the lower bound on the total mass of an asymptotically hyperbolic manifold conjectured by the Riemannian Penrose inequality. This fill-in can then be glued to $(M,\gamma)$ and the conclusion would follow from the Riemannian Penrose inequality.
\end{proof}
\begin{remark}
	There are two different conjectured versions of the asymptotically hyperbolic Penrose inequality corresponding roughly to whether the asymptotically hyperbolic manifold is being viewed as time-symmetric initial data for a spacetime with negative cosmological constant, or as an asymptotically hyperbolic slide of an asymptotically flat spacetime. The former considers the horizon to be a minimal surface, whereas the latter considers a surface of constant mean curvature equal to $n-1$. Here we consider the former version, and while it seems that many suspect it to hold there is some reason to suspect that perhaps only the latter is true in general. We do not wish to speculate on the conjecture here, however it is worthwhile noting that the construction given above could in principle be used to construct a counterexample if one exists. That is, if one can find an asymptotically hyperbolic manifold (perhaps only defined near infinity) containing a surface on which the right-hand side of \ref{eq-minsurfaceneg} is larger than the total mass, then after gluing, a counterexample to the asymptotically hyperbolic Penrose inequality would be constructed. However, the author has attempted this with no success.
\end{remark}

\subsection{A Penrose-like inequality with electric charge}\label{ss-charge}

In the framework of general relativity, it is common to consider the Einstein equations coupled to other equations governing the matter content of the universe to model. Considering $(M,\gamma)$ as initial data from the perspective of general relativity, we can add an electric field $E$ -- a vector field on $M$ -- to describe initial data for the coupled Einstein--Maxwell system, gravity coupled to the electric field. In this section we will only consider the case of vanishing cosmological constant, which in the preceding sections equated to manifolds with non-negative scalar curvature. This restriction to only considering vanishing cosmological constant is not required to construct the fill-ins, however the estimates become much messier and these ``charged'' fill-ins are not of particular interest independent of a Penrose-like inequality. However, such an inequality cannot hold in the positive cosmological constant case and remains out of reach for the negative cosmological constant case, for the reasons discussed above. On the other hand, the scalar curvature lower bound considered here is not zero and in fact depends on the electric field $E$. Specifically, we will require
\be \label{eq-QDEC} 
R_g\geq (n-1)(n-2)|E|_g^2.
\ee 

The divergence of the electric field corresponds to the charge of any matter source terms, and in order to later apply a charged Riemannian Penrose inequality \cite{Jang79,KWY-exts-2015,KWY-2017,MeChargedPenrose} we will ask that this vanishes. It is not strictly required that the $\nabla\cdot E=0$ for the charged Riemannian Penrose inequality to hold (see \cite{MeChargedPenrose}), however it will nevertheless be fruitful to impose this for the fill-ins constructed. In the preceding sections, a fill-in is taken as a manifold with boundary metric and mean curvature prescribed since this is the appropriate boundary condition to glue the fill-in to an exterior manifold while preserving the scalar curvature condition. However, when an electric field is present we would also like to preserve the sign of $\text{div}(E)$ in a distributional sense when performing the gluing, which amounts to matching $\phi=E\cdot n$ the normal component of the electric field on $\S$. Therefore the appropriate Bartnik data for including electric charge is the triple $(\S,g,H,\phi)$ (see, for example, Section 3 of \cite{CM}).

We again consider a metric of the form \eqref{eq-metform} except now use a Reissner--Nordstr\"om manifold as our model and profile curve
\be
\gamma=A(x)^2dt^2+\frac{v_{m,Q}(t)^2}{r_o^{2}}g,
\ee 
where $v_{m,Q}$ satisfies $v_{m,Q}(0)=r_o$ and
\bee 
	v'_{m,Q}(t)=\sqrt{1+\frac{Q^2}{v(t)^{2(n-2)}}-\frac{2m}{v(t)^{n-2}}}.
\eee 
In what follows we will use the shorthand $v=v_{m,Q}$ for the sake of presentation. We will also assume $H$ is constant here, as it simplifies the computations considerably and does not change the qualitative properties of the estimate we obtain. We again choose $A(x)$ according to \eqref{eq-Aformula} and compute the scalar curvature of $\gamma$ similarly to \eqref{eq-mainscalareq} to obtain
\be
	R_\gamma=\frac{r_o^2}{v^2}\( R_g-\frac{n-2}{n-1}H^2\(1+ \frac{Q^2}{r_o^{2(n-2)}}-\frac{2m}{r_o^{n-2}}\)^{-1}\(1-\frac{Q^2}{v^{2(n-2)}}\)\).
\ee
We then set the electric field as
\bee 
	E=\frac{r_o^{n-1} \phi}{v^{n-1} A}\p_t,
\eee 
which is easily checked to be divergence-free. Then we see that the appropriate energy condition, $R_g\geq (n-1)(n-2)|E|_g^2$, is equivalent to
\bee
R_g-\frac{n-2}{n-1}H^2\(1+ \frac{Q^2}{r_o^{2(n-2)}}-\frac{2m}{r_o^{n-2}}\)^{-1}\(1-\frac{Q^2}{v^{2(n-2)}}\)-\frac{(n-1)(n-2)r_o^{2(n-2)}\phi^2}{v^{2(n-2)}}\geq0.
\eee
Assuming $H>(n-1)\phi>0$, we may choose $Q$ in such a way to cancel out the  $v^2$ terms. Namely, we set
\begin{align*} 
	Q^2&=\frac{(n-1)^2r_o^{2(n-2)}\hat\phi^2}{H^2}\(1+ \frac{Q^2}{r_o^{2(n-2)}}-\frac{2m}{r_o^{n-2}}\)\\
	Q^2&=(n-1)^2r_o^{2(n-2)}\hat\phi^2\( 1-\frac{2m}{r_o^{2(n-2)}} \)\(H^2-(n-1)^2\hat\phi^2\)^{-1},
\end{align*}
where we write $\hat\phi=\max_\Sigma(\phi)$ for the sake of presentation. This leaves $m<\frac{r_o^{2(n-2)}}{2}$ as a free parameter, which we will choose to ensure
\bee 
	R_g-\frac{n-2}{n-1}H^2(1+ \frac{Q^2}{r_o^{2(n-2)}}-\frac{2m}{r_o^{n-2}})^{-1}\geq0.
\eee  
With our choice of $Q$, this is equivalent to
\bee 
R_g-\frac{n-2}{n-1}\,\frac{\(H^2-(n-1)^2\hat\phi^2\)}{\( 1-\frac{2m}{r_o^{n-2}} \)}\geq0,
\eee  
so we choose
\bee
	m=\frac{r_o^{n-2}}{2}\(1- \frac{n-2}{n-1}\,\frac{H^2-(n-1)^2\hat\phi^2}{\min_\Sigma(R_g)} \).
\eee
Our fill-in now satisfies the appropriate energy condition for the charged Riemannian Penrose inequality \eqref{eq-QDEC}, however we have yet to check that the metric is non-singular. In fact, we require that $m>Q$ to ensure that $v'$ vanishes somewhere and therefore providing us with a minimal surface boundary. Note that with our choice of $m$, we have

\begin{align*} 
	Q^2&=r_o^{2(n-2)}\( \frac{n-2}{n-1}\, \frac{H^2-(n-1)^2\hat\phi^2}{\min_\Sigma(R_g)} \)\(\frac{H^2}{(n-1)^2\hat\phi^2}-1\)^{-1}\\
		Q^2&=\frac{(n-1)(n-2)\hat\phi^2r_o^{2(n-2)}}{\min_\Sigma(R_g)}.
\end{align*}
In order to ensure $m>Q$, we calculate the difference $m-Q$ from the above expressions to obtain
\begin{align} \begin{split}\label{eq-q-m}
	\frac{2R_g}{r_o^{(n-2)}}(m-Q)=&\,\min_\Sigma(R_g)-\frac{n-2}{n-1}H^2+(n-2)(n-1)\hat\phi^2\\&-2\hat\phi\sqrt{(n-1)(n-2)\min_\Sigma(R_g)}.\end{split}
\end{align}
The right-hand side of \eqref{eq-q-m} is a quadratic expression in $\sqrt{\min_\Sigma(R_g)}$, so we can directly check that this is positive when
\bee 
	\min_\Sigma(R_g)>\frac{n-2}{n-1}\( (n-1)\hat\phi+H \)^2.
\eee 

Note that when $\phi\equiv0$ this reduces to the condition for the existence of fill-in for the uncharged case \cite{MM17}. By construction, we have proven the following.
\begin{thm}\label{thm-E-fill-in}
	Let $(\S,g,H,\phi)$ be charged Bartnik data with constant $H$ satisfying $H>(n-1)\phi>0$ and satisfying
	\bee 
		\min_\Sigma(R_g)>\frac{n-2}{n-1}\( (n-1)\max_\Sigma(\phi)+H \)^2
	\eee 
	then there exists a metric $\gamma$ and divergence-free vector field $E$ on $M=\Sigma\times[0,1]$ satisfying $R_g\geq (n-1)(n-2)|E|_\gamma^2$, such that one boundary component is a minimal surface and on the other the induced metric is $g$, outward-pointing mean curvature is $H$, and the outward normal component of $E$ is $\phi$.
\end{thm}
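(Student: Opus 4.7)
The plan is to apply the cylindrical fill-in framework from Section \ref{S-fillins} with a Reissner--Nordstr\"om profile function $v_{m,Q}$ in place of the previously used Schwarzschild or Schwarzschild--(anti-)de Sitter profiles. Concretely, on $\Sigma \times [t_o,0]$ I would take
\[
\gamma = A(x)^2\, dt^2 + \frac{v_{m,Q}(t)^2}{r_o^2}\, g,
\]
with $A(x)$ chosen according to \eqref{eq-Aformula} so that the $t=0$ slice inherits metric $g$ and outward mean curvature $H$. The two free parameters $(m,Q)$ will be used to simultaneously enforce the charged dominant energy condition \eqref{eq-QDEC} and to produce a minimal surface at some $t=t_o<0$. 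For the electric field I would use the ansatz $E = \frac{r_o^{n-1}\phi}{v^{n-1} A}\,\partial_t$, which is divergence free by direct computation (the $v^{n-1}$ factor cancels the transverse volume growth) and whose outward normal component at $t=0$ is exactly $\phi$.

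Next I would compute $R_\gamma$ by the same method as \eqref{eq-mainscalar2} adapted to this profile, obtaining an expression of the schematic form
\[
R_\gamma = \frac{r_o^2}{v^2}\Bigl( R_g - \tfrac{n-2}{n-1}H^2\bigl(1 + \tfrac{Q^2}{r_o^{2(n-2)}} - \tfrac{2m}{r_o^{n-2}}\bigr)^{-1}\bigl(1 - \tfrac{Q^2}{v^{2(n-2)}}\bigr)\Bigr),
\]
and compare it directly against $(n-1)(n-2)|E|_\gamma^2 = (n-1)(n-2)r_o^{2(n-2)}\phi^2/v^{2(n-2)}$. The strategic choice is to first fix $Q$ so that the $v$-dependent terms in the resulting inequality cancel; this reduces the charged energy condition to a uniform (in $t$) inequality depending only on boundary data and determines $Q^2$ explicitly in terms of $m$, $H$, $\hat\phi := \max_\Sigma \phi$, and $r_o$. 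I would then pick $m \in (0, r_o^{n-2}/2)$ small enough that the remaining uniform inequality, $R_g - \tfrac{n-2}{n-1}H^2(1 + Q^2/r_o^{2(n-2)} - 2m/r_o^{n-2})^{-1} \geq 0$, holds pointwise on $\Sigma$. The hypothesis $H > (n-1)\phi$ is used here to keep the relevant denominators positive.

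The main obstacle will be the sub-extremal requirement $m > Q$, which is what guarantees that $v' = 0$ admits a real positive root $r_+ \in (0, r_o)$ and hence provides a genuine minimal surface inner boundary (as opposed to a cusp or naked singularity in the model). Substituting the chosen values of $m$ and $Q$, the difference $m - Q$ reduces to a quadratic polynomial in $\sqrt{\min_\Sigma R_g}$; computing its discriminant (or equivalently completing the square) shows it is positive precisely when
\[
\min_\Sigma R_g > \tfrac{n-2}{n-1}\bigl((n-1)\hat\phi + H\bigr)^2,
\]
which is the standing hypothesis of the theorem. This closes the construction: the resulting $(\gamma, E)$ satisfies $R_\gamma \geq (n-1)(n-2)|E|_\gamma^2$ with $\nabla\cdot E = 0$, the $t=0$ boundary carries the prescribed Bartnik data $(g,H,\phi)$, and the $t=t_o$ boundary (where $v'$ vanishes) is minimal, as required. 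Sanity check: setting $\phi \equiv 0$ recovers exactly the uncharged hypothesis from \cite{MM17}, which is the expected degeneration.
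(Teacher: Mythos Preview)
Your proposal is correct and follows essentially the same approach as the paper: the Reissner--Nordstr\"om profile on $\Sigma\times[t_o,0]$, the same divergence-free electric field ansatz, choosing $Q$ to cancel the $v$-dependent terms in the energy condition, then choosing $m$ to handle the residual inequality, and finally verifying $m>Q$ via a quadratic in $\sqrt{\min_\Sigma R_g}$. One small sharpening: rather than picking $m$ merely ``small enough'', the paper takes the specific value $m=\tfrac{r_o^{n-2}}{2}\bigl(1-\tfrac{n-2}{n-1}\tfrac{H^2-(n-1)^2\hat\phi^2}{\min_\Sigma R_g}\bigr)$ that saturates the residual inequality; since $Q^2$ grows as $m$ decreases, this maximal choice is what makes the $m-Q$ computation reduce cleanly to the stated quadratic and yield the threshold hypothesis.
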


From this fill-in, we are able to prove an electrically charged Penrose-like inequality (and charged positive mass theorem for manifold with boundary).
\begin{thm}\label{thm-chargedpmt}
	Let $(M,\gamma,E)$ be an asymptotically flat manifold with charge of dimension $3\leq n\leq7$, and boundary $\S$ with charged Bartnik data $(\S,g,H,\phi)$, satisfying $\nabla\cdot E\geq0$ and
	
		\bee 
	\min\limits_\S R_g>\frac{n-2}{n-1}\( (n-1)\max\limits_\S\phi+\max\limits_\S H \)^2.
	\eee 
	Then
	\bee 
		\m_{ADM}\geq m+\frac{Q_\Sigma^2-Q^2}{m+\sqrt{m^2-Q^2}},
	\eee 
 	where $Q_\Sigma=\frac{1}{\omega_{n-1}}\int_S\phi\,d\mu_g$ is the electric charge on $\S$ in $M$, and the parameters $m$ and $Q$ are given by
 	\begin{align}\begin{split}\label{eq-mQdefn}
 		m&=\frac{r_o^{n-2}}{2}\(1- \frac{n-2}{n-1}\,\frac{H^2-(n-1)^2\max_\Sigma\phi^2}{\min_\Sigma(R_g)} \)\\
 		Q^2&=\frac{(n-1)(n-2)\max_\Sigma\phi^2r_o^{2(n-2)}}{\min_\Sigma(R_g)}.\end{split}
 	\end{align}
 	Furthermore, if $\nabla\cdot E\equiv 0$ then we have
 	\be \label{eq-loc-QM}
 	\m_{ADM}\geq |Q_\Sigma|.
 	\ee
\end{thm}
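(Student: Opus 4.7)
The plan is to construct a fill-in of the boundary data via Theorem \ref{thm-E-fill-in}, glue it to $(M,\gamma,E)$ across the Bartnik data to produce a complete asymptotically flat manifold with minimal surface boundary, and then invoke a charged Riemannian Penrose inequality to bound the ADM mass from below.

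The hypothesis $\min_\Sigma R_g > \frac{n-2}{n-1}\bigl((n-1)\max_\Sigma\phi + \max_\Sigma H\bigr)^2$ is precisely what is required to apply Theorem \ref{thm-E-fill-in}, so I would begin by letting $(\Omega,\gamma_\Omega,E_\Omega)$ be the resulting fill-in, whose parameters $m$ and $Q$ are given by \eqref{eq-mQdefn}. This fill-in has a minimal surface inner boundary $\Sigma_H$, satisfies $R_{\gamma_\Omega}\ge (n-1)(n-2)|E_\Omega|_{\gamma_\Omega}^2$, and carries a divergence-free electric field. I would then glue $(\Omega,\gamma_\Omega,E_\Omega)$ to $(M,\gamma,E)$ across $\Sigma$; because the induced metrics, outward mean curvatures, and normal components of the electric field match, a Miao-style corner argument \cite{Miao02} shows that the resulting manifold $(\widetilde M,\widetilde\gamma,\widetilde E)$ is asymptotically flat with minimal surface boundary $\Sigma_H$ and that both the charged dominant energy condition $R_{\widetilde\gamma}\ge (n-1)(n-2)|\widetilde E|_{\widetilde\gamma}^2$ and the charge inequality $\nabla\cdot\widetilde E\ge 0$ hold in the distributional sense across the corner.

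Applying the charged Riemannian Penrose inequality of \cite{MeChargedPenrose} to $(\widetilde M,\widetilde\gamma,\widetilde E)$ then yields $\m_{ADM}\ge \tfrac{1}{2}\bigl(r_H^{n-2}+Q_{\mathrm{tot}}^2/r_H^{n-2}\bigr)$, where $r_H$ is the area radius of $\Sigma_H$ and $Q_{\mathrm{tot}}$ is the total asymptotic charge. By construction $\Sigma_H$ sits at the outer root of $v_{m,Q}'$, so $r_H^{n-2}=m+\sqrt{m^2-Q^2}$, and correspondingly $Q^2/r_H^{n-2}=m-\sqrt{m^2-Q^2}$. Substituting these identities and splitting $Q_{\mathrm{tot}}^2 = Q^2+(Q_{\mathrm{tot}}^2-Q^2)$ rearranges the Penrose bound into an expression of the same shape as the claimed inequality. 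The divergence theorem applied to $E$ on $M$, together with the hypothesis $\nabla\cdot E\ge 0$ and the vanishing of $\nabla\cdot E_\Omega$, gives $|Q_{\mathrm{tot}}|\ge |Q_\Sigma|$, so the resulting bound in terms of $Q_\Sigma$ is exactly what is stated.

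For the second assertion, when $\nabla\cdot E\equiv 0$ on $M$ the divergence theorem yields $|Q_{\mathrm{tot}}|=|Q_\Sigma|$, and \eqref{eq-loc-QM} follows immediately from the arithmetic--geometric mean inequality applied to $\tfrac{1}{2}\bigl(r_H^{n-2}+Q_\Sigma^2/r_H^{n-2}\bigr)$. The main obstacle I anticipate is verifying that the charged Penrose inequality applies in the required generality, namely on a manifold with only Lipschitz metric across the corner and with electric field whose divergence is a non-negative distribution. Either an existing manifold-with-corners version in the literature suffices, or one must carry out a smoothing argument in the spirit of \cite{Miao02}, rounding out the corner while simultaneously preserving the charged energy condition and the sign of $\nabla\cdot E$ in the limit.
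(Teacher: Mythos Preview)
Your overall strategy matches the paper's proof, but there is one genuine gap. Theorem \ref{thm-E-fill-in} only produces a fill-in when the prescribed mean curvature is \emph{constant}; the Bartnik data $(\Sigma,g,H,\phi)$ in Theorem \ref{thm-chargedpmt} does not assume this. The paper handles this by constructing the fill-in for the data $(\Sigma,g,H_o,\phi)$ with $H_o=\max_\Sigma H$ rather than $H$ itself. Consequently the mean curvatures at the gluing interface do \emph{not} match as you assert; instead one has $H_o\geq H$, which is exactly the inequality needed for the distributional scalar curvature (and hence the charged energy condition) to have the correct sign across the corner in the sense of Miao \cite{Miao02}. Without this step your gluing argument as written does not go through, and the parameters $m,Q$ in \eqref{eq-mQdefn} would not be well-defined from the fill-in construction.

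Two smaller points. First, the obstacle you anticipate regarding a charged Penrose inequality on manifolds with corners is already resolved in the literature: the paper invokes Theorem 1.3 of \cite{CM}, which is stated for manifolds with corners and extends to dimensions $3\leq n\leq 7$ (Remark \ref{rem-CMdims}). Second, the charge appearing in that inequality is the charge enclosed by the horizon, which equals $Q_\Sigma$ directly because the fill-in's electric field is divergence-free; there is no need to pass through the asymptotic charge $Q_{\mathrm{tot}}$ and then compare. Your AM--GM derivation of \eqref{eq-loc-QM} is a valid alternative to the paper's direct appeal to the charged positive mass theorem \cite{PosMassQBH,dLGLS}.
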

\begin{remark}
	The expression for $m$ given by \eqref{eq-mQdefn} can be compared to the charged Hawking mass, while the expression $Q_\Sigma^2-Q^2$ can be seen to vanish when $(S,g)$ is a round sphere and $\phi$ is constant.
\end{remark}
\begin{proof}
	We apply Theorem \ref{thm-E-fill-in} to construct a fill-in of the Bartnik data $(\S,g,H_o,\phi)$, where $H_o=\max_\Sigma(H)$. We then obtain a (charged) manifold with corner by attaching the fill-in to $M$ and can apply the charged Riemannian Penrose inequality for manifolds with corners established in \cite{CM}. Note that the results in \cite{CM} are stated only in dimension $3$ only, however it is clear from the proof that the charged Riemannian Penrose inequality with corners holds in dimension up to $7$ (see remark \ref{rem-CMdims} below). Specifically, from Theorem 1.3 of \cite{CM} we have
	\be \label{eq-firstE}
		\m_{ADM}(M,\gamma)\geq \frac{r_H^{n-2}}2 \left(1+\frac{Q_\Sigma^2}{r_H^{2(n-2)}}\right),
	\ee  
	where $r_H$ is the area radius of the minimal surface boundary.
	
	From the definition of the profile function $v$ used in the fill-in, we have that the minimal surface occurs when $v'=0$, which implies
	\bee 
	1+\frac{Q^2}{r_H^{2(n-2)}}-\frac{2m}{r_H^{n-2}}=0,
	\eee 
	and then from \eqref{eq-firstE}, we have
	\be \label{eq-secondE}
		\m_{ADM}(M,\gamma)\geq m+\frac{Q_\Sigma^2-Q^2}{r_H^{n-2}},
	\ee
	with 
	\bee 
		r_H^{n-2}=m+\sqrt{m^2-Q^2}.
	\eee 
	Finally note that \eqref{eq-loc-QM} follows from applying the charged positive mass theorem \cite{PosMassQBH, dLGLS} instead of the charged Riemannian Penrose inequality.
\end{proof}

\begin{remark}\label{rem-CMdims}
	The charged Riemannian Penrose inequality with corners established in \cite{CM} is presented in dimension 3, following \cite{Miao02}. However, as illustrated in the appendix of \cite{MM17}, the argument holds up to dimension $7$ (where the standard problem concerning the regularity of minimal surfaces prevents it immediately being generalised to higher dimensions than that). Naturally, there are dimensional constants in the inequality used, which we are careful to correctly include here.  
\end{remark}

\begin{remark}
	As with the other inequalities established by this method, it is expected that there is room slightly improve the inequality, however comparing it to the lower bound on ADM mass in terms of the charged Hawking mass in dimension 3, one sees that the method is unlikely to achieve an optimal inequality.
\end{remark}

\subsection{Engelhardt--Wall Outer Entropy and Bray's Inner Bartnik Mass} \label{ssec-entropy}
Recently, Wang \cite{WangEntropy} noted that the concept of outer entropy due to Engelhardt and Wall \cite{EW} in the context of the AdS/CFT correspondence is essentially the same concept as Bray's inner Bartnik mass \cite{BCinnermass}. The former was formulated from the perspective of the AdS/CFT correspondence for asymptotically hyperbolic manifolds while the latter was formulated from a purely geometric perspective for asymptotically flat manifolds. In particular, the outer entropy is equivalent to an asymptotically hyperbolic analogue of the inner mass, rather than the standard one. Nevertheless, at the heart of both is the problem of constructing fill-ins of Bartnik data with a minimal surface boundary, and taking the supremum of the minimal surface area over an appropriate class of fill-ins\footnote{Technically, the inner mass is defined using fill-ins that extend out to another asymptotic end, but this distinction is minor.}.

The Penrose-like inequality obtained in \cite{MM17} in fact was first motivated by considerations of the Bartnik--Bray inner mass. Since this mass is taken as a supremum, we immediately conclude that the inner Bartnik mass of Bartnik data $(\S,g,H)$ is bounded below by the right-hand side of \eqref{eq-MM17}.

While an asymptotically hyperbolic analogue of the standard Bartnik mass has been recently investigated \cite{CCM}, to the best of the author's knowledge an asymptotically hyperbolic inner Bartnik mass has not been considered in the literature. Nevertheless, there is an obvious analogue one could consider, which is the one equivalent to the outer entropy. Namely, we define the asymptotically hyperbolic inner Bartnik mass of given Bartnik data $(\S,g,H)$ as the supremum, taken over the set of all fill-ins with scalar curvature bounded below by $-\epsilon n(n-1)$ with no closed minimal surfaces except for a minimal surface boundary, of the quantity
\bee 
\frac12r^{n-2}\( 1+\epsilon r^2 \) 
\eee 
where here $r$ is the area radius of the minimal surface boundary. It follows from \eqref{eq-minsurfaceneg}  that this asymptotically hyperbolic inner Bartnik mass of some data $(\S,g,H)$ is bounded below by
\be \label{eq-forr}
	\frac12r_o^{n-2}\( 1+\epsilon r_o^2-\chi \)
\ee 
where $\chi$ is given by \eqref{eq-chidefn} and $r_o$ is the area radius of $g$. Then observation of Wang connects this to the Engelhardt--Wall outer entropy, which is simply the supremum of the area of the minimal surface over the same set of fill-ins. That is the lower bound for the outer entropy is $\omega_{n-1}r_H^{n-1}$ where $r_H$ satisfies \eqref{eq-minsurfaceneg}.

\bibliographystyle{abbrv}

\end{document}